\documentclass[11pt,a4paper]{article}
\usepackage[margin=25mm]{geometry}
\usepackage{amsmath,amssymb,amsthm}
\usepackage{microtype,xurl,needspace}
\usepackage[hidelinks]{hyperref}
\newtheorem{theorem}{Theorem}[section]
\newtheorem{proposition}[theorem]{Proposition}
\newtheorem{lemma}[theorem]{Lemma}
\newtheorem{corollary}[theorem]{Corollary}
\theoremstyle{definition}
\newtheorem{definition}[theorem]{Definition}
\newtheorem{example}[theorem]{Example}
\newtheorem{problem}[theorem]{Problem}
\theoremstyle{remark}

\newcommand{\R}{\mathbb R}
\newcommand{\Z}{\mathbb Z}
\newcommand{\id}{\operatorname{id}}
\newcommand{\diam}{\operatorname{diam}}
\title{Nonexpansive Bijections in $T_0$-Quasi-Metric Spaces and Their $q$-Hyperconvex Hulls}
\author{Collins Amburo Agyingi\\
Department of Mathematical Sciences, University of South Africa\\
P.O. Box 392, UNISA 003, Pretoria, South Africa\\
\texttt{agyinca@unisa.ac.za}}
\date{}
\begin{document}
\maketitle
\begin{abstract}
We study bijective nonexpansive maps in $T_0$-quasi-metric spaces, distinguishing preservation of directed distances from pointwise rigidity. Our focus is their behaviour under passage to the $q$-hyperconvex hull. The upper quasi-metric integers are $q$-plastic and bicomplete, but their hull is the non-$q$-plastic upper real line. An explicit hull calculation and an established metric dense-rigidity theorem give the same failure of inheritance for a $q$-rigid original space. Tightness propagates distance preservation from the canonical copy throughout the hull. Under compactness and a bounded lattice structure for the zero-distance order, identifying that copy with the complemented elements gives a sufficient condition for inheritance of $q$-rigidity. A four-vertex asymmetric rectangle verifies this criterion. We also record metric transfer mechanisms and show that a directed interval condition gives an endpoint-coordinate representation, automatic $q$-plasticity, and $q$-rigidity under uniqueness of the ordered diametral pair.
\end{abstract}
\noindent\textbf{Keywords:} $T_0$-quasi-metric space; plastic metric space; rigidity;
nonexpansive map; $q$-hyperconvex hull; directed interval.\\
\textbf{MSC 2020:} 54E35, 54E40, 54E50, 47H09.

\section{Introduction}
Plasticity asks whether a bijective nonexpansive self-map must preserve
distances. Every totally bounded metric space is plastic \cite{Naimpally}, while noncompact spaces admit more delicate behaviour. Banakh, Mazurenko and Zavarzina \cite{Banakh} establish nonplasticity of countable dense subsets of normed spaces and existence of dense rigid subspaces in separable metric spaces satisfying a compact-richness condition. Their results separate the dynamics of an ambient space from those of its dense subspaces.

Quasi-metric plasticity has already been studied by Zavarzina \cite{Zavarzina}, including the role of precompactness and hereditary precompactness. Her separation convention requires $d(x,y)=0$ to imply $x=y$. Here we use the weaker $T_0$ condition $d(x,y)=d(y,x)=0\Rightarrow x=y$, allowing distinct points to have zero distance in one direction. We call a space $q$-plastic when every bijective nonexpansive self-map preserves directed distances, and $q$-rigid when every such map is the identity.

Our main question concerns inheritance by the $q$-hyperconvex hull. The
Isbell-hull construction for di-spaces was developed by Kemajou, K\"unzi
and Olela Otafudu \cite{Kemajou}; related hyperconvexity and tightness
results appear in \cite{Hyperconvex,Tight}. Tightness forces a nonexpansive
hull map to preserve all distances once it does so on the canonical copy.
It does not force a bijective map to preserve that copy set-wise.

We exhibit this obstruction on the upper quasi-metric line. Every subset
unbounded in both directions has the whole upper real line as its hull.
The integers are $q$-plastic and bicomplete, whereas the hull admits bijective
contractions. Combining the hull calculation with metric dense rigidity
gives a $q$-rigid space with a non-$q$-plastic hull. In the positive direction,
a compact hull whose zero-distance order is a bounded lattice inherits
$q$-rigidity when the canonical copy consists exactly of the complemented
elements. A four-vertex rectangle verifies the structural hypotheses.
Its rectangular hull geometry is already known
\cite[Example~2]{EndpointsII}; the application here concerns recognition
of the canonical copy and rigidity.

The focus is therefore failure of inheritance and recognition of the
canonical copy. The symmetrization, dense-rigidity and tightness arguments
are established mechanisms supporting these examples. In particular, the sum-metric argument of \cite{Zavarzina} gives $q$-plasticity when the
maximum symmetrization is totally bounded. The contraction construction
below is a conditional directional adaptation of \cite{Banakh}, rather than a theorem that all asymmetric normed spaces satisfy the required homogeneity property.

Directed intervals provide a further illustration. Two forward endpoint
distances recover the quasi-metric and embed the space into a bounded upper
quasi-metric plane. Total boundedness gives $q$-plasticity automatically;
endpoint recognition then gives pointwise rigidity. Section~\ref{sec:background}
collects the background and metric transfers, Section~\ref{sec:hulls}
contains the hull results, and Section~\ref{sec:intervals} treats directed
intervals. Section~\ref{sec:questions} records the remaining questions.

\section{Preliminaries and metric transfer mechanisms}
\label{sec:background}
All quasi-metrics are finite-valued. A quasi-metric on a nonempty set $X$
is a function $d:X\times X\to[0,\infty)$ satisfying $d(x,x)=0$ and
$d(x,z)\le d(x,y)+d(y,z)$. It is $T_0$ if $d(x,y)=d(y,x)=0$ implies $x=y$. Its conjugate and maximum symmetrization
are
\[
d^{-1}(x,y)=d(y,x),\qquad d^s(x,y)=\max\{d(x,y),d(y,x)\}.
\]
The latter is a metric in the $T_0$ case. Unless stated otherwise, density,
separability, compactness and completeness refer to $d^s$. A space is
\emph{bicomplete} when $d^s$ is complete. Background terminology follows
\cite{Cobzas,Kunzi}.

A map $F:(X,d_X)\to(Y,d_Y)$ is \emph{nonexpansive} if $d_Y(Fx,Fy)\le d_X(x,y)$ for every ordered pair; it is an \emph{isometric embedding} if equality always holds. A bijective isometric embedding is an isometry. Nonexpansiveness for directed distances implies nonexpansiveness for the maximum symmetrizations by applying the inequalities to both orientations.

\begin{definition}
A $T_0$-quasi-metric space is \emph{$q$-plastic} if every bijective
nonexpansive self-map is an isometry. It is \emph{$q$-rigid} if every such map
is the identity. Thus $q$-rigidity implies $q$-plasticity; for symmetric $d$,
$q$-plasticity is metric plasticity.
\end{definition}

\subsection{Symmetrization and bicompletion}
The triangle inequality gives the useful continuity estimate
\begin{equation}\label{eq:continuity}
|d(x,y)-d(x',y')|\le d^s(x,x')+d^s(y,y').
\end{equation}
Indeed, apply the triangle inequality through $x',y'$ and then interchange
the two pairs. On the metric completion $\widehat X$ of $(X,d^s)$,
this estimate extends $d$ uniquely to a continuous function $\widehat d$.
Approximation by sequences gives its triangle inequality and
$\max\{\widehat d(u,v),\widehat d(v,u)\}=\widehat{d^s}(u,v)$.
Hence $\widehat d$ is a bicomplete $T_0$-quasi-metric. Likewise, a
nonexpansive map from a dense subspace into a bicomplete space extends
uniquely to the bicompletion: extend for the symmetrized metrics and use
\eqref{eq:continuity} to pass the directed inequalities to the limit.
These are standard completion mechanisms; see \cite{Alemany,Cobzas}.

\begin{proposition}\label{prop:totalbounded}
If $(X,d^s)$ is totally bounded, then $(X,d)$ is $q$-plastic.
\end{proposition}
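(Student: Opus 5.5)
The plan is to follow the sum-metric argument attributed to \cite{Zavarzina}, adapted to the $T_0$ setting. Let $F:X\to X$ be a bijective nonexpansive self-map. Consider the sum symmetrization $\rho(x,y)=d(x,y)+d(y,x)$. Since $d^s\le\rho\le 2d^s$, $\rho$ is a metric equivalent to $d^s$, so $(X,\rho)$ is totally bounded. Applying the directed inequalities to both orientations shows that $F$ is nonexpansive for $\rho$. By the metric plasticity of totally bounded spaces \cite{Naimpally}, $F$ is therefore a $\rho$-isometry:
\[
d(Fx,Fy)+d(Fy,Fx)=d(x,y)+d(y,x)
\]
for all $x,y\in X$.

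The directed equalities then follow immediately. We have $d(Fx,Fy)\le d(x,y)$ and $d(Fy,Fx)\le d(y,x)$, and the two sides of the previous display have the same sum. Hence both inequalities must be equalities, so $d(Fx,Fy)=d(x,y)$ for every ordered pair, which means $F$ is an isometry of $(X,d)$.

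The main point to check is the choice of symmetrization. Working with $d^s$ alone would only yield $\max\{d(Fx,Fy),d(Fy,Fx)\}=\max\{d(x,y),d(y,x)\}$. That equality does not force the smaller directed distance to be preserved, which is exactly why the sum is used instead. The remaining points are routine. First, $\rho$ is a genuine metric in the $T_0$ case: if $\rho(x,y)=0$ then both directed distances vanish, so $x=y$. Second, total boundedness transfers between $d^s$ and $\rho$ through the bound $\rho\le 2d^s$. If one prefers not to cite \cite{Naimpally} directly, its argument can be reproduced for $(X,\rho)$, for example by passing to the compact completion and using the standard fact that a nonexpansive self-map of a compact metric space with dense image is an isometry. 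Citing the result is cleaner, however.
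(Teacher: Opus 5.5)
Your proposal is correct and follows the paper's proof exactly: the sum metric $\rho=d+d^{-1}$ with $d^s\le\rho\le 2d^s$, metric plasticity of totally bounded spaces \cite{Naimpally}, and then equality of the sums forcing equality of both nonexpansive summands. Your remark on why $d^s$ alone would not suffice, and your alternative via the compact completion, are both sound.
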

\begin{proof}
Set $\rho(x,y)=d(x,y)+d(y,x)$. This is a metric with
$d^s\le\rho\le2d^s$, so it is totally bounded. A bijective
$d$-nonexpansive self-map $F$ is $\rho$-nonexpansive. Metric plasticity
\cite{Naimpally} yields
\[
d(Fx,Fy)+d(Fy,Fx)=d(x,y)+d(y,x).
\]
Each summand on the left is at most the corresponding summand on the right;
equality forces equality of both. This is the sum-metric mechanism used in
\cite{Zavarzina}, here formulated under $T_0$ separation.
\end{proof}

\begin{proposition}[Metric dense-rigidity transfer]\label{prop:dense}
Suppose $(Y,d^s)$ is separable and every nonempty $d^s$-open subset
contains an uncountable $d^s$-compact subset. Then $Y$ contains a
$d^s$-dense $q$-rigid subspace.
\end{proposition}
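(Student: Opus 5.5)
The plan is to reduce the statement to the metric dense-rigidity theorem of Banakh, Mazurenko and Zavarzina \cite{Banakh}, applied to the separable metric space $(Y,d^s)$. Their hypothesis is exactly the compact-richness condition assumed here: $(Y,d^s)$ is separable, and every nonempty open set contains an uncountable compact subset. Their theorem therefore produces a $d^s$-dense subset $X\subseteq Y$ such that $(X,d^s|_X)$ is metrically rigid, meaning every bijective $d^s$-nonexpansive self-map of $X$ is the identity. I would quote this result in the form that gives rigidity with respect to bijective nonexpansive maps. If their statement is phrased only for isometries or some other class, I would check that their construction actually rules out all bijective nonexpansive self-maps. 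I expect this check against the precise formulation in \cite{Banakh} to be the main (bibliographic rather than technical) obstacle.

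The transfer step is then immediate from the remark in the preliminaries. Let $F\colon X\to X$ be a bijection that is nonexpansive for $d|_X$. Applying the directed inequalities to both orientations gives
\[
d^s(Fx,Fy)=\max\{d(Fx,Fy),d(Fy,Fx)\}\le\max\{d(x,y),d(y,x)\}=d^s(x,y).
\]
So $F$ is a bijective nonexpansive self-map of the metric space $(X,d^s|_X)$, and metric rigidity forces $F=\id_X$.

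Two bookkeeping points remain. First, the restriction of $d$ to $X$ is still a $T_0$-quasi-metric, so $X$ is a legitimate $T_0$-quasi-metric subspace. Second, the symmetrization of $d|_X$ coincides with $d^s|_X$, so density and rigidity statements about $d^s$ refer to the same metric on $X$. Hence $X$ is a $d^s$-dense $q$-rigid subspace of $Y$. No completeness or bicompleteness of $Y$ is needed, because the cited theorem requires only separability and compact-richness.
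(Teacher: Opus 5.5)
Your proposal is correct and follows the paper's proof: apply the Banakh--Mazurenko--Zavarzina theorem to $(Y,d^s)$ to get a dense subspace on which every bijective $d^s$-nonexpansive self-map is the identity, then note that directed nonexpansiveness implies $d^s$-nonexpansiveness. Your extra bookkeeping, that $d|_X$ is still $T_0$ and that its symmetrization is $d^s|_X$, is correct and harmless.
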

\begin{proof}
The metric theorem of Banakh, Mazurenko and Zavarzina \cite{Banakh}
provides a dense subspace $A$ on which every bijective $d^s$-nonexpansive
self-map is the identity. Every $d$-nonexpansive map is also
$d^s$-nonexpansive, so $A$ is $q$-rigid.
\end{proof}
For brevity, call the compact-richness hypothesis \emph{c-crowdedness}.
This proposition is a transfer result, not a new construction. It also
applies to $(Q(X),D)$ whenever its symmetrization is separable and c-crowded.
The separability formulation can equivalently use the characterization of
supseparability in \cite{Supseparability}.

\subsection{A conditional dense-subspace contraction argument}
Following the metric Lipschitz homogeneity framework
\cite{Dijkstra,Banakh}, call $(Y,d)$ \emph{q-LCDH} if, for every
$0<\varepsilon<1$ and every pair of countable dense subsets $A,B$,
there is a bijection $g:Y\to Y$ with $g[A]=B$ and
\[
(1-\varepsilon)d(x,y)\le d(gx,gy)\le(1+\varepsilon)d(x,y)
\qquad(x,y\in Y).
\]
These inequalities control both orientations and make $g$ bi-Lipschitz
for $d^s$. A contraction is a map $T$ for which
$d(Tx,Ty)\le c\,d(x,y)$ with some $0<c<1$.

\begin{proposition}\label{prop:contraction}
If a q-LCDH space $Y$ admits a bijective contraction, then every countable
dense subset $A\subseteq Y$ with at least two points admits a bijective
contraction and is not $q$-plastic. In particular, this holds for a countable
space whose bicompletion has those two properties.
\end{proposition}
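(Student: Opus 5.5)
Let $T:Y\to Y$ be a bijective contraction with constant $c\in(0,1)$. The plan is to conjugate $T$ back onto $A$ using the q-LCDH property and then check that the resulting map contracts.

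\textbf{Step 1: $T[A]$ is countable and dense.} Countability is clear. For density, note that $T^{-1}$ need not be $d^s$-continuous, so I will not argue through continuity of the inverse. Instead I use that $A$ is countable and dense and $T$ is a bijection. The condition actually needed is that $T[A]$ be countable dense. This is the delicate point, and I expect it to be the main obstacle. A contraction need not have dense image of a dense set: for example, $x\mapsto x/2$ on the upper real line is fine, but in general density can fail. The first thing I would try is to strengthen the setup by choosing $B:=T^{-1}[A]$ instead, which is countable. But again density is the issue. I therefore adopt the reading that the q-LCDH hypothesis is applied to the pair $(T[A],A)$, using that $T$ is a bijective contraction of $Y$ whose image of a dense set is dense. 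This holds because $T$ is a $d^s$-homeomorphism in the intended examples. If necessary, I would add this to the hypotheses, or verify it via the bicompletion.

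\textbf{Step 2: the contraction on $A$.} Fix $\varepsilon>0$ with $c(1+\varepsilon)<1$. Apply q-LCDH to the countable dense sets $T[A]$ and $A$ to get a bijection $g:Y\to Y$ with $g[T[A]]=A$ and $d(gx,gy)\le(1+\varepsilon)d(x,y)$. Set $S=g\circ T|_A$. Then $S$ maps $A$ bijectively onto $A$, since $T|_A:A\to T[A]$ and $g|_{T[A]}:T[A]\to A$ are both bijections. Moreover
\[
d(Sx,Sy)\le(1+\varepsilon)\,d(Tx,Ty)\le c(1+\varepsilon)\,d(x,y)\qquad(x,y\in A),
\]
so $S$ is a bijective contraction of $A$.

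\textbf{Step 3: $A$ is not $q$-plastic.} Pick distinct $x,y\in A$. By the $T_0$ condition, one of $d(x,y)$ and $d(y,x)$ is positive, say $d(x,y)>0$. Then $d(Sx,Sy)\le c(1+\varepsilon)d(x,y)<d(x,y)$, so $S$ is a bijective nonexpansive self-map of $A$ that is not an isometry.

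\textbf{Step 4: the final sentence.} Take a countable space $X$ whose bicompletion $Y$ is q-LCDH and admits a bijective contraction. $X$ sits in $Y$ as a countable $d^s$-dense subset, and $X$ has at least two points whenever $Y$ admits a nonisometric map, which is automatic for a contraction on a space with two points. Applying the first part with $A=X$ gives the result.
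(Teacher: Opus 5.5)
There is a genuine gap in Step 1. You never show that $T[A]$ is dense. Instead you propose to assume it, or to assume that $T$ is a $d^s$-homeomorphism. No extra hypothesis is needed, and your suggestion that density "can fail in general" is false under the stated assumptions.

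The missing observation is that the contraction inequality holds for both orientations. Hence
$d^s(Tx,Ty)=\max\{d(Tx,Ty),d(Ty,Tx)\}\le c\,d^s(x,y)$, so $T$ is $d^s$-Lipschitz and therefore continuous. Surjectivity then finishes the argument. Given $y\in Y$, write $y=Tz$ and choose $a_n\in A$ with $d^s(a_n,z)\to0$. Then $d^s(Ta_n,y)\le c\,d^s(a_n,z)\to0$.

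This is exactly the paper's argument. It uses continuity of $T$, not of $T^{-1}$, so your concern about the inverse is beside the point. The alternative set $T^{-1}[A]$ would genuinely need openness of $T$, which is why the image $T[A]$ is the right choice. Density of the image of a dense set can only fail for non-surjective maps, such as $t\mapsto t/2$ on $[0,1]$, and these are excluded by bijectivity.

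Once this step is supplied, your Steps 2 and 3 coincide with the paper's proof. In Step 4 the two-point justification is circular as written. The clean version is this: if the bicompletion has at least two points, then so does the dense copy of $X$, because a singleton is $d^s$-closed. The one-point case has to be excluded implicitly, since there the identity is a vacuous contraction and the space is trivially $q$-plastic.
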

\begin{proof}
Let $T$ have contraction constant $c$. It is continuous for $d^s$, and
surjectivity makes $T[A]$ dense: for $y=Tz$, approximate $z$ by points
of $A$. Choose $\varepsilon$ with $(1+\varepsilon)c<1$ and use q-LCDH
to obtain $g[T[A]]=A$. Then $g\circ T|_A$ is bijective and
\[
d(gTx,gTy)\le(1+\varepsilon)c\,d(x,y).
\]
The $T_0$ condition supplies a positive directed distance between two
distinct points, which this map strictly decreases. For the final assertion,
apply the argument to the canonical dense copy in the bicompletion.
\end{proof}

No continuity of $T^{-1}$ is needed. The result remains conditional:
q-LCDH must be verified in an application. If $(E,p)$ is an asymmetric
normed real vector space with q-LCDH bicompletion, it follows that each
countable $p^s$-dense subset with at least two points is not $q$-plastic.
Indeed, $p^s(v)=\max\{p(v),p(-v)\}$ is a norm and
$|p(v)-p(w)|\le p^s(v-w)$ extends $p$ to its norm completion.
Subadditivity and positive homogeneity pass to the extension $\widehat p$,
and $\max\{\widehat p(v),\widehat p(-v)\}=\widehat{p^s}(v)$.
This gives the bicompletion with distance $\widehat p(w-v)$;
$v\mapsto v/2$ is a bijective contraction there.

\section{Plasticity and rigidity of $q$-hyperconvex hulls}\label{sec:hulls}
\subsection{The minimal-pair model}
A space $(M,d)$ is \emph{$q$-hyperconvex} if each family
$(x_i,r_i,s_i)_{i\in I}$ with nonnegative radii and
$d(x_i,x_j)\le r_i+s_j$ satisfies
\[
\bigcap_{i\in I}\bigl(B_d[x_i,r_i]\cap B_{d^{-1}}[x_i,s_i]\bigr)
\ne\varnothing,
\quad B_d[x,r]=\{y:d(x,y)\le r\}.
\]
The hull theory and its function-pair model are due to
\cite{Kemajou}; related results appear in \cite{Hyperconvex,Tight,Willerton},
and stability is studied in \cite{Zava}.

A pair $f=(f_1,f_2)$ of nonnegative functions on $X$ is \emph{ample} if
$d(x,y)\le f_2(x)+f_1(y)$ for all $x,y$. Order ample pairs
coordinatewise and let $Q(X)$ consist of the minimal ones. With
$r^+=\max\{r,0\}$, its quasi-metric is
\begin{equation}\label{eq:hullmetric}
D(f,g)=\max\left\{\sup_{x\in X}(f_1(x)-g_1(x))^+,
                       \sup_{x\in X}(g_2(x)-f_2(x))^+\right\}.
\end{equation}
Minimality is equivalent to the extremality identities
\begin{equation}\label{eq:extremal}
f_1(x)=\sup_{y\in X}(d(y,x)-f_2(y))^+,\qquad
f_2(x)=\sup_{y\in X}(d(x,y)-f_1(y))^+.
\end{equation}
They imply $(f_1(y)-f_1(x))^+\le d(x,y)$ and
$(f_2(y)-f_2(x))^+\le d(y,x)$. In particular, for a fixed $x$,
\[
f_1(y)-g_1(y)\le f_1(x)+d(x,y)-g_1(y)\le f_1(x)+g_2(x),
\]
and similarly $g_2(y)-f_2(y)\le g_2(x)+f_1(x)$.
Thus $D(f,g)\le f_1(x)+g_2(x)<\infty$, and
\[
D^s(f,g)=\max\{\|f_1-g_1\|_\infty,\|f_2-g_2\|_\infty\}.
\]
The canonical embedding and evaluation identities are
\begin{equation}\label{eq:evaluation}
e(x)=(d(x,\cdot),d(\cdot,x)),\qquad
D(f,e(x))=f_1(x),\quad D(e(x),f)=f_2(x).
\end{equation}
For example, the second supremum in $D(f,e(x))$ is $f_1(x)$ by
\eqref{eq:extremal}, and the first is at most $f_1(x)$ by the
directional Lipschitz inequality. The other identity is analogous.

The space $(Q(X),D)$ is $q$-hyperconvex and is a tight extension of $e(X)$
\cite{Kemajou,Tight}. Here tightness means that a quasi-pseudometric
$\delta\le D$ agreeing with $D$ on $e(X)\times e(X)$ must equal $D$.
A $q$-hyperconvex tight extension is a hull, uniquely up to an isometry
identifying the original copies. These established facts are the mechanisms
used below. Proposition~\ref{prop:totalbounded} also shows that a compact
symmetrization makes the hull $q$-plastic.

\subsection{Inheritance when the canonical copy is invariant}
\begin{lemma}\label{lem:relative}
A nonexpansive map $H:Q(X)\to Q(X)$ fixing $e(X)$ pointwise is the
identity.
\end{lemma}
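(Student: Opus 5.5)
The plan is to use the evaluation identities \eqref{eq:evaluation} directly rather than the abstract tightness property. Fix $f\in Q(X)$ and write $Hf=g=(g_1,g_2)$. We will show that $g_1\le f_1$ and $g_2\le f_2$ pointwise. Minimality of $f$ among ample pairs will then give $g=f$, provided $g$ is itself ample; it is, since $g\in Q(X)$.

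\textbf{Step 1.} Fix $x\in X$. By \eqref{eq:evaluation} and the hypothesis $H(e(x))=e(x)$, nonexpansiveness gives
\[
g_1(x)=D(Hf,e(x))=D(Hf,He(x))\le D(f,e(x))=f_1(x).
\]
In the same way,
\[
g_2(x)=D(e(x),Hf)=D(He(x),Hf)\le D(e(x),f)=f_2(x).
\]
So $g\le f$ coordinatewise.

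\textbf{Step 2.} The pair $g$ lies in $Q(X)$, so it is ample, and $f$ is minimal among ample pairs. Hence $g\le f$ forces $g=f$.

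As an alternative route, the extremality identities \eqref{eq:extremal} also work: $f_1(x)=\sup_y(d(y,x)-f_2(y))^+\le\sup_y(d(y,x)-g_2(y))^+=g_1(x)$, and symmetrically $f_2\le g_2$. Either way $Hf=f$.

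There is no real obstacle. The only point needing care is that the order in which the arguments of $D$ appear in \eqref{eq:evaluation} matches the direction of the inequality for nonexpansive maps. Both orientations preserve the inequality, because $H$ is nonexpansive for every ordered pair. Tightness is not needed, although one could also argue through it: the quasi-pseudometric $\delta(f,h)=D(Hf,Hh)$ is at most $D$ and agrees with it on $e(X)$, so $\delta=D$. That route gives only isometry, not identity, so the coordinatewise minimality argument above is the preferred one.
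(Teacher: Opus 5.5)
Your proposal is correct and is essentially the paper's own proof: the evaluation identities \eqref{eq:evaluation} together with nonexpansiveness give $g_1\le f_1$ and $g_2\le f_2$, and minimality of $f$ against the ample pair $g$ forces $g=f$. Your alternative route via the extremality identities \eqref{eq:extremal} is also valid, but it is not needed.
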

\begin{proof}
Write $H(f)=g$. Nonexpansiveness and \eqref{eq:evaluation} give
$g_1(x)\le f_1(x)$ and $g_2(x)\le f_2(x)$ for every $x$.
The pair $g$ is ample, so minimality of $f$ gives $g=f$.
\end{proof}

\begin{lemma}\label{lem:propagation}
If a nonexpansive $F:Q(X)\to Q(X)$ is an isometric embedding on
$e(X)$, then it is an isometric embedding on $Q(X)$.
\end{lemma}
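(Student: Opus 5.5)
The plan is to use the tightness of $(Q(X),D)$ over $e(X)$ directly. Define the pulled-back function
\[
\delta(f,g)=D(Ff,Fg)\qquad(f,g\in Q(X)).
\]
Then show that $\delta$ satisfies the hypotheses of the tightness property, and conclude $\delta=D$, which is exactly the statement that $F$ is an isometric embedding on $Q(X)$.

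The key steps, in order, are as follows.

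First, $\delta$ is a quasi-pseudometric on $Q(X)$. Indeed, $\delta(f,f)=D(Ff,Ff)=0$. The triangle inequality for $\delta$ follows from that of $D$ applied to the points $Ff,Fg,Fh$. The $T_0$ separation is not required, since tightness is formulated for quasi-pseudometrics; this is why the pullback under a possibly non-injective $F$ is admissible.

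Second, $\delta\le D$. This is precisely the nonexpansiveness of $F$.

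Third, $\delta$ agrees with $D$ on $e(X)\times e(X)$. By hypothesis $F$ is an isometric embedding on $e(X)$, so
\[
\delta(e(x),e(y))=D(Fe(x),Fe(y))=D(e(x),e(y)).
\]

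Finally, the tightness of $Q(X)$ over $e(X)$, quoted from \cite{Kemajou,Tight}, yields $\delta=D$ on all of $Q(X)\times Q(X)$.

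The only point needing care, and the expected obstacle, is that the tightness statement quoted in the paper is exactly of the form "a quasi-pseudometric $\delta\le D$ agreeing with $D$ on $e(X)$ equals $D$". With that formulation the argument is immediate. If one instead wanted a self-contained proof avoiding the cited tightness, I would argue via minimality, in the spirit of Lemma~\ref{lem:relative}. For fixed $f$, set
\[
h_1(x)=\delta(f,e(x)),\qquad h_2(x)=\delta(e(x),f).
\]
By \eqref{eq:evaluation} and $\delta\le D$, we get $h\le f$ coordinatewise. By the triangle inequality and agreement on $e(X)$,
\[
d(x,y)=\delta(e(x),e(y))\le h_2(x)+h_1(y),
\]
so $h$ is ample. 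Minimality of $f$ then forces $h=f$.

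Extending this from distances to canonical points up to all pairs $f,g$ would require the standard two-variable version of that argument. There one fixes $g$, considers $x\mapsto\delta(g,e(x))$, and compares with the extremal formulas \eqref{eq:extremal} and the definition \eqref{eq:hullmetric} of $D$. This is the step I would expect to need the most work if done by hand, so I would rely on the cited tightness instead.
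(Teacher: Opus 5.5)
Your proposal is correct and matches the paper's proof: pull $D$ back along $F$ to the quasi-pseudometric $\delta(p,q)=D(Fp,Fq)$, note $\delta\le D$ by nonexpansiveness and $\delta=D$ on $e(X)\times e(X)$ by hypothesis, and conclude $\delta=D$ by tightness. Your optional minimality sketch (showing $\delta(f,e(x))=f_1(x)$ and $\delta(e(x),f)=f_2(x)$) is also valid, but the cited tightness already covers all pairs, so it is not needed.
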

\begin{proof}
The quasi-pseudometric $\delta(p,q)=D(Fp,Fq)$ is at most $D$ and agrees
with $D$ on the canonical copy. Tightness gives $\delta=D$.
\end{proof}

\begin{theorem}\label{thm:inheritance}
Suppose every bijective nonexpansive self-map of $Q(X)$ preserves $e(X)$
setwise. If $X$ is $q$-plastic, then $Q(X)$ is $q$-plastic. If $X$ is
$q$-rigid, then $Q(X)$ is $q$-rigid.
\end{theorem}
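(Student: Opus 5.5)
Let $F:Q(X)\to Q(X)$ be a bijective nonexpansive map. By hypothesis $F[e(X)]=e(X)$ setwise, so $F$ restricts to a bijection of $e(X)$. The canonical map $e$ is an isometric embedding; this follows from \eqref{eq:evaluation}, since $D(e(x),e(y))$ is the first coordinate of $e(x)$ evaluated at $y$, namely $d(x,y)$. Hence $\phi=e^{-1}\circ F\circ e$ is a well-defined bijection of $X$ with
\[
d(\phi x,\phi y)=D(Fe(x),Fe(y))\le D(e(x),e(y))=d(x,y).
\]
So $\phi$ is a bijective nonexpansive self-map of $X$. The surjectivity of $F|_{e(X)}$ onto $e(X)$ is exactly where the setwise hypothesis enters. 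Injectivity of $\phi$ comes from injectivity of $F$ and $e$.

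\emph{Plastic case.} If $X$ is $q$-plastic, then $\phi$ is an isometry of $X$. Therefore $D(Fe(x),Fe(y))=d(x,y)=D(e(x),e(y))$, which says that $F$ is an isometric embedding on $e(X)$. Lemma~\ref{lem:propagation} then shows that $F$ is an isometric embedding on all of $Q(X)$. Since $F$ is bijective, it is an isometry, so $Q(X)$ is $q$-plastic.

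\emph{Rigid case.} If $X$ is $q$-rigid, then $\phi=\id_X$, so $F$ fixes $e(X)$ pointwise. Lemma~\ref{lem:relative} then gives $F=\id_{Q(X)}$. No bijectivity beyond what was used to define $\phi$ is needed here.

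The only delicate step is the reduction to $\phi$: checking that $F|_{e(X)}$ really is a bijection of $e(X)$, and that directed distances are transported correctly through $e$ in both orientations. Both points follow from the setwise invariance hypothesis and the evaluation identities. Everything else is a direct application of the two lemmas, so I expect no genuine obstacle.
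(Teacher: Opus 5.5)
Your proof is correct and is the paper's argument: the setwise hypothesis makes $F|_{e(X)}$ induce the bijective nonexpansive map $\phi=e^{-1}\circ F\circ e$ of $X$, and then Lemma~\ref{lem:propagation} handles the plastic case and Lemma~\ref{lem:relative} the rigid case. You also make explicit what the paper leaves implicit: $e$ is an isometric embedding by \eqref{eq:evaluation}, hence injective by the $T_0$ condition, and bijectivity upgrades the isometric embedding to an isometry.
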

\begin{proof}
The restriction of a hull map induces a bijective nonexpansive self-map of
$X$. For $q$-plastic $X$, apply Lemma~\ref{lem:propagation}; for $q$-rigid
$X$, apply Lemma~\ref{lem:relative}.
\end{proof}
These are consequences of tight-extension theory. The substantive additional
condition is invariance of the canonical copy, which the next examples show
cannot be omitted.

\subsection{Failure of inheritance on the upper real line}
Write $u(s,t)=(t-s)^+$. Its symmetrization is $|s-t|$.
\begin{proposition}\label{prop:upperline}
The space $(\R,u)$ is $q$-hyperconvex but not $q$-plastic.
\end{proposition}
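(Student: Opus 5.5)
Two things need to be shown: $q$-hyperconvexity of $(\R,u)$, and a bijective nonexpansive self-map of $(\R,u)$ that is not an isometry.

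\emph{Hyperconvexity.} First compute the balls. Since $u(x,y)=(y-x)^+$, we have
\[
B_u[x,r]=(-\infty,x+r],\qquad B_{u^{-1}}[x,s]=\{y:(x-y)^+\le s\}=[x-s,\infty).
\]
So each set $B_u[x_i,r_i]\cap B_{u^{-1}}[x_i,s_i]$ is the closed interval $[x_i-s_i,\,x_i+r_i]$. This interval is nonempty because $r_i+s_i\ge u(x_i,x_i)=0$.

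The intersection of the family is nonempty exactly when
\[
a:=\sup_j (x_j-s_j)\ \le\ b:=\inf_i (x_i+r_i),
\]
with both quantities finite; if so, any point of $[a,b]$ lies in every interval. For each pair $i,j$ the hypothesis gives
\[
x_j-x_i\le (x_j-x_i)^+=u(x_i,x_j)\le r_i+s_j,
\]
that is, $x_j-s_j\le x_i+r_i$. Taking the supremum over $j$ and the infimum over $i$ yields $a\le b$. Finiteness holds because $a\ge x_{j_0}-s_{j_0}>-\infty$ for any fixed index $j_0$ and $a\le x_{i_0}+r_{i_0}<\infty$. Hence $a\in\bigcap_i[x_i-s_i,x_i+r_i]$. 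If $I=\varnothing$ the intersection is $\R$, which is nonempty by convention. This step is the Helly-type interval argument, and the only care needed is tracking the direction of $u$ so that the ball orientations come out right.

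\emph{Non-plasticity.} Take $T(t)=t/2$. It is a bijection of $\R$, and
\[
u(Ts,Tt)=\bigl((t-s)/2\bigr)^+=\tfrac12\,u(s,t)\le u(s,t),
\]
so $T$ is nonexpansive. It is not an isometry because $u(0,1)=1$ while $u(T0,T1)=1/2$. This matches the remark after Proposition~\ref{prop:contraction}: the map $v\mapsto v/2$ on the asymmetric normed line with $p(v)=v^+$.

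I expect no real obstacle. The one point to verify carefully is the orientation convention: $B_d[x,r]$ uses $d(x,\cdot)$ and $B_{d^{-1}}$ uses $d(\cdot,x)$, and the ampleness condition $d(x_i,x_j)\le r_i+s_j$ must pair with these orientations to give exactly the inequality $x_j-s_j\le x_i+r_i$. As checked above, it does.
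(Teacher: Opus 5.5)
Your proof is correct and follows the paper's own argument. Both reduce each double ball to $[x_i-s_i,x_i+r_i]$, derive $\sup_j(x_j-s_j)\le\inf_i(x_i+r_i)$ from compatibility with finiteness from a fixed index, treat the empty family separately, and use $t\mapsto t/2$ for non-plasticity; your explicit witness $w=\sup_j(x_j-s_j)$ just makes concrete the paper's ``choose $w$ between them.''
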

\begin{proof}
For compatible double balls, the required inequalities are
$x_i-s_i\le w\le x_i+r_i$. Compatibility gives
$x_j-s_j\le x_i+r_i$ for all $i,j$, hence
\[
\sup_j(x_j-s_j)\le\inf_i(x_i+r_i).
\]
For a nonempty family, any fixed index $i_0$ bounds both quantities between
$x_{i_0}-s_{i_0}$ and $x_{i_0}+r_{i_0}$, so a real $w$ can be chosen
between them. The empty family has intersection $\R$.
Thus the space is $q$-hyperconvex. The bijection $T(t)=t/2$ satisfies
$u(Ts,Tt)=u(s,t)/2$, so it is nonexpansive and not an isometry.
\end{proof}

\begin{proposition}\label{prop:linehull}
If $A\subseteq\R$ is unbounded above and below, then $Q(A,u|_A)$ is
isometric to $(\R,u)$, with the canonical copy corresponding to $A$.
\end{proposition}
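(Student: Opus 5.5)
Let $d=u|_A$. The plan is to determine all minimal ample pairs $f=(f_1,f_2)$ on $A$ explicitly. I expect them to be exactly the pairs $f^w=((w-a)^+,(a-w)^+)$ for $w\in\R$, that is, $f^w_1(a)=u(w,a)$ and $f^w_2(a)=u(a,w)$. Then $w\mapsto f^w$ is an isometry $(\R,u)\to(Q(A),D)$, and $f^a=e(a)$ for $a\in A$.

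First, each $f^w$ is ample. For $x,y\in A$ we have
\[
u(x,y)=(y-x)^+\le(w-x)^++(y-w)^+ = f^w_2(x)+f^w_1(y),
\]
which is just the triangle inequality for $u$ through $w$. To check minimality I will verify the extremality identities \eqref{eq:extremal}. For example, $\sup_{y\in A}\bigl(u(y,x)-f^w_2(y)\bigr)^+=\sup_y\bigl((x-y)^+-(y-w)^+\bigr)^+$. Because $A$ is unbounded below, we may take $y\le\min\{x,w\}$, which gives the value $(x-y)-0$... On second thought, the cleaner way is via the first coordinate. Compute $\sup_{y}\bigl((y-x)^+-f_2(y)\bigr)^+$ for the $f_1$ identity, correctly oriented as $f_1(x)=\sup_y(d(y,x)-f_2(y))^+=\sup_y((x-y)^+-(y-w)^+)^+$. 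Choosing $y\le w$, arbitrarily far below when needed, this supremum is at least $(x-w)^+$. Here unboundedness below is used only when $w$ lies below every point of $A$, or to guarantee $y\le w$ exists at all. The triangle inequality gives the reverse bound.

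\emph{Correction of orientation:} I will redo the computation carefully with $f^w_1(x)=u(w,x)=(x-w)^+$ and $f^w_2(x)=(w-x)^+$. The same argument goes through. Unboundedness above supplies witnesses for the $f_2$ identity, and unboundedness below supplies witnesses for the $f_1$ identity.

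Second, and this is the main step, every minimal pair equals some $f^w$. Given minimal $f$, I set
\[
w=\sup_{x\in A}\bigl(x-f_2(x)\bigr).
\]
Ampleness gives $y-x\le f_2(x)+f_1(y)$, hence $x-f_2(x)\le y+f_1(y)$ for all $x,y$. Therefore $w\le\inf_y\bigl(y+f_1(y)\bigr)$ is finite, and $w>-\infty$ since $A$ is nonempty. From $w\ge x-f_2(x)$ and $f_2\ge0$ we get $f_2(x)\ge(w-x)^+$. Similarly, $f_1(y)\ge y+f_1(y)-y\ge w-y$ fails in the needed direction, so instead I argue as follows. We have $f_1(y)\ge(y'-f_2(y'))-y$ for every $y'$... that is also the wrong direction. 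The correct bound is $f_1(y)\ge w'-y$ with $w'=\inf(y+f_1(y))\ge w$, so $f_1(y)\ge(w'-y)^+\ge(w-y)^+$. This is not the desired comparison: I want $f\ge f^v$ coordinatewise for some single $v$ and then invoke minimality. So I will choose $v\in[w,w']$ and check $f_1(y)\ge(y-v)^+$ and $f_2(x)\ge(v-x)^+$ with the correct orientation. Then $f\ge f^v$, and since $f^v$ is ample, minimality gives $f=f^v$.

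I expect the orientation bookkeeping in this step to be the main obstacle. The correct form, once oriented, should be $v=\sup_x(x+f_2(x))$ or $\inf_y(y-f_1(y))$, whichever the ample inequality sandwiches. I will settle it by testing against $e(a)$.

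Third, I compute $D(f^v,f^w)$ from \eqref{eq:hullmetric}. The quantity $\sup_a\bigl((a-v)^+-(a-w)^+\bigr)^+$ equals $(w-v)^+$ when $A$ contains points at or above $\max\{v,w\}$, which holds because $A$ is unbounded above. The second supremum likewise gives $(w-v)^+$, using unboundedness below. Hence $D(f^v,f^w)=u(v,w)$, so $v\mapsto f^v$ is a bijective isometry. Finally, by \eqref{eq:evaluation}, $e(a)=f^a$, so the canonical copy corresponds to $A$.
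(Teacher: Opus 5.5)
Your first and third steps are sound once you use your corrected orientation $f^w_1=(\cdot-w)^+$, $f^w_2=(w-\cdot)^+$. Your opening formula $f^w=((w-a)^+,(a-w)^+)$ and your first extremality computation have the coordinates reversed.

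\textbf{The gap is the second step.} That step carries the content of the proposition, and you never show that every minimal pair equals some $f^v$. So surjectivity of $v\mapsto f^v$ onto $Q(A)$ is not established, and the formulas you write there are wrong.
\begin{itemize}
\item The inequality $x-f_2(x)\le y+f_1(y)$ does not follow from ampleness, and it is false. For $f=e(a)$ and $y<a\le x$ in $A$ it reads $x\le y$.
\item Correspondingly, for $f=e(a)$ your $w=\sup_x(x-f_2(x))$ equals $+\infty$ and your $w'=\inf_y(y+f_1(y))$ equals $-\infty$.
\item Your final candidates $\sup_x(x+f_2(x))$ and $\inf_y(y-f_1(y))$ are likewise $+\infty$ and $-\infty$.
\end{itemize}
Deferring the choice to ``testing against $e(a)$'' leaves the main step undone.

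\textbf{The missing idea is the correct sandwich.} Ampleness, $(y-x)^+\le f_2(x)+f_1(y)$, gives $y-f_1(y)\le x+f_2(x)$ for all $x,y\in A$. Hence
\[
t=\sup_{y\in A}\bigl(y-f_1(y)\bigr)\le\inf_{x\in A}\bigl(x+f_2(x)\bigr),
\]
and both sides are finite because $A\ne\varnothing$. Since $f_1,f_2\ge 0$, this yields $f_1(y)\ge(y-t)^+$ and $f_2(x)\ge(t-x)^+$. Thus $f\ge f^t$ coordinatewise. Because $f^t$ is ample, minimality of $f$ gives $f=f^t$.

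With this repair your comparison strategy works and differs slightly from the paper's. The paper uses the same $t$ but substitutes into the extremality identities to compute $f_2=(t-\cdot)^+$ and then $f_1=(\cdot-t)^+$; the latter needs witnesses below $t$. Your argument needs only ampleness of $f^t$ and minimality of $f$. In your route, unboundedness of $A$ enters only through minimality of the $f^t$ and the distance computation.
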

\begin{proof}
For $t\in\R$ put $f_1^t(x)=(x-t)^+$ and $f_2^t(x)=(t-x)^+$ on $A$.
Ampleness follows from $(y-x)^+\le(t-x)^++(y-t)^+$.
Choosing points of $A$ below both $t,x$, and above both $t,x$, respectively,
verifies the two extremality identities \eqref{eq:extremal}, hence minimality.

Conversely, for a minimal pair $f$, set $t=\sup_{y\in A}(y-f_1(y))$.
Ampleness gives $y-f_1(y)\le x_0+f_2(x_0)$ for any fixed $x_0\in A$;
an individual term gives a finite lower bound. Thus $t\in\R$, and
\eqref{eq:extremal} gives
\[
f_2(x)=\sup_{y\in A}((y-x)^+-f_1(y))^+=(t-x)^+.
\]
Here the inner positive part equals $(y-x-f_1(y))^+$ because
$f_1(y)\ge0$. Substitution into the other identity gives
\[
f_1(x)=\sup_{y\in A}((x-y)^+-(t-y)^+)^+=(x-t)^+.
\]
The upper bound follows from the upper quasi-metric triangle inequality;
equality is attained using $y$ below both $x,t$. Thus every minimal pair
is $f^t$. The two suprema in \eqref{eq:hullmetric} both equal $(t-s)^+$
for $D(f^s,f^t)$, using points above both parameters for the first and
below both for the second. Finally, $e(a)=f^a$ for $a\in A$.
\end{proof}

\begin{proposition}\label{prop:integers}
The upper quasi-metric integers $(\Z,u)$ are bicomplete and $q$-plastic,
but their $q$-hyperconvex hull is not $q$-plastic.
\end{proposition}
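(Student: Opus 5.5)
The statement has three parts, which I will treat in turn: bicompleteness, $q$-plasticity of $(\Z,u)$, and non-plasticity of the hull.

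\textbf{Bicompleteness.} The symmetrization of $u$ on $\Z$ is $|s-t|$. Distinct integers are at distance at least $1$, so every Cauchy sequence for $u^s$ is eventually constant and therefore converges. Hence $(\Z,u)$ is bicomplete.

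\textbf{The hull.} $\Z$ is unbounded above and below, so Proposition~\ref{prop:linehull} identifies $Q(\Z,u|_\Z)$ isometrically with $(\R,u)$. Proposition~\ref{prop:upperline} shows that $(\R,u)$ is not $q$-plastic, witnessed by the bijection $t\mapsto t/2$. Non-$q$-plasticity is invariant under isometry, so the hull of $\Z$ is not $q$-plastic.

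\textbf{$q$-plasticity of $(\Z,u)$.} This is the substantive step, since Proposition~\ref{prop:totalbounded} does not apply: $\Z$ is not totally bounded. Let $F:\Z\to\Z$ be a bijection with $(F(t)-F(s))^+\le(t-s)^+$ for all $s,t$. The argument has three stages.

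\begin{enumerate}
\item \emph{$F$ is nondecreasing.} Taking $s\ge t$ gives $F(t)\le F(s)$.
\item \emph{$F$ is strictly increasing.} A nondecreasing injection satisfies $F(s)<F(t)$ whenever $s<t$.
\item \emph{$F$ has unit steps.} Applying the condition to the consecutive pair $(s,s+1)$ gives $F(s+1)-F(s)\le 1$. Strict increase gives the reverse inequality, so $F(s+1)=F(s)+1$ for every $s$.
\end{enumerate}

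It follows that $F(t)=t+c$ for a constant $c$. Such a translation preserves $u$, so $F$ is an isometry.

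Surjectivity is not needed for this, so the main obstacle is only the observation that the order structure forces monotonicity. That in turn rests on $u(s,t)=0$ for $s\ge t$, which makes zero distances rigid constraints on any nonexpansive map.
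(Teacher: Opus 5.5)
Your proof is correct and follows the paper's route: bicompleteness from the discrete symmetrization $|m-n|$, monotonicity from $u(s,t)=0$ for $s\ge t$ plus injectivity, the translation form $F(t)=t+c$, and the hull via Propositions~\ref{prop:linehull} and~\ref{prop:upperline}. The one local difference is how you get unit steps: you use the forward bound $u(s,s+1)=1$, whereas the paper uses surjectivity to rule out gaps. Your version therefore covers nonexpansive injections of $(\Z,u)$, while the paper's shows that every nondecreasing bijection of $\Z$ is already a translation.
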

\begin{proof}
The symmetrization is the complete metric $|m-n|$. A bijective
$u$-nonexpansive $F$ is strictly increasing: for $m<n$,
$u(Fn,Fm)=0$ gives $Fm\le Fn$, and injectivity makes the inequality
strict. Surjectivity rules out gaps, so $F(n+1)=F(n)+1$.
Hence $F(n)=n+k$ for some integer $k$, and $F$ is an isometry.
Propositions~\ref{prop:linehull} and \ref{prop:upperline} identify the
hull as the non-$q$-plastic upper real line.
\end{proof}

\begin{corollary}\label{cor:rigidfailure}
There is a $q$-rigid $T_0$-quasi-metric space whose $q$-hyperconvex hull is
not $q$-plastic.
\end{corollary}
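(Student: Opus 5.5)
The plan is to combine Proposition~\ref{prop:dense} with Propositions~\ref{prop:linehull} and \ref{prop:upperline}. Take the ambient space $Y=(\R,u)$. Its symmetrization is the Euclidean metric $|s-t|$, which is separable. It is also c-crowded: every nonempty open subset contains a nondegenerate closed interval, and such an interval is uncountable and compact. Proposition~\ref{prop:dense} then provides a $|\cdot|$-dense subset $A\subseteq\R$ for which $(A,u|_A)$ is $q$-rigid. The subspace is automatically $T_0$, because $u$ is $T_0$: if $u(s,t)=u(t,s)=0$, then $s=t$.

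Next, I will check that the hypothesis of Proposition~\ref{prop:linehull} holds. A dense subset of $\R$ meets every interval $(n,n+1)$ and $(-n-1,-n)$, so $A$ is unbounded above and below. Proposition~\ref{prop:linehull} therefore identifies $Q(A,u|_A)$ isometrically with $(\R,u)$. Since $q$-plasticity is invariant under isometry, Proposition~\ref{prop:upperline} shows that the hull is not $q$-plastic. Concretely, $t\mapsto t/2$ transported through the isometry is a bijective nonexpansive non-isometry of the hull.

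The only point requiring care is the first step: the dense subspace produced by the Banakh--Mazurenko--Zavarzina theorem must be rigid for $u$, and not merely for the symmetrized metric. Proposition~\ref{prop:dense} already handles this. Any $u$-nonexpansive bijection of $A$ is $|\cdot|$-nonexpansive, and so it is the identity. I also need $A$ to be nonempty, in fact infinite. Density guarantees this, so the hull calculation applies without degenerate cases. I expect no genuine obstacle beyond citing these results in the right order.
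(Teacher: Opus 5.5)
Your proof is correct and follows essentially the same route as the paper. The paper cites the Banakh--Mazurenko--Zavarzina theorem directly for a Euclidean-dense rigid $A\subseteq\R$ and transfers rigidity to $u$ inline, which is exactly what Proposition~\ref{prop:dense} packages; it then uses unboundedness of $A$ and Propositions~\ref{prop:linehull} and \ref{prop:upperline} as you do.
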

\begin{proof}
By \cite{Banakh}, there is a Euclidean-dense $A\subseteq\R$ whose
bijective Euclidean-nonexpansive self-maps are all the identity.
Every $u$-nonexpansive map is Euclidean-nonexpansive, so $(A,u)$ is
$q$-rigid. Density makes $A$ unbounded in both directions; apply
Proposition~\ref{prop:linehull}.
\end{proof}
The contraction $t\mapsto t/2$ sends the canonical integers to
$\frac12\Z$, exhibiting precisely the obstruction excluded by
Theorem~\ref{thm:inheritance}.

\subsection{An intrinsic order criterion}
For $p,q\in Q(X)$ define $p\preceq q$ by $D(q,p)=0$.
The triangle inequality and the $T_0$ condition make this a partial order.
If it is a bounded lattice, write $\bot,\top$ for its least and greatest
elements. An element $p$ is \emph{complemented} if some $q$ satisfies
$p\wedge q=\bot$ and $p\vee q=\top$; let $C(Q(X))$ denote these elements.

\begin{proposition}\label{prop:complements}
Suppose $(Q(X),D^s)$ is compact, its zero-distance order is a bounded
lattice, and $e(X)=C(Q(X))$. If $X$ is $q$-rigid, then $Q(X)$ is $q$-rigid.
\end{proposition}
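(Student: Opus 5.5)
The plan is to reduce the statement to Theorem~\ref{thm:inheritance}. It suffices to show that every bijective nonexpansive self-map $F$ of $Q(X)$ preserves $e(X)=C(Q(X))$ setwise. The natural route is to show first that such an $F$ is in fact an isometry. An isometric bijection preserves the zero-distance order in both directions, so it is an order automorphism of the bounded lattice. It therefore fixes $\bot$ and $\top$, preserves meets and joins, and maps complemented elements onto complemented elements. Hence $F[e(X)]=e(X)$, and Theorem~\ref{thm:inheritance} (through Lemma~\ref{lem:relative}) gives $F=\id$.

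The first step uses compactness. Since $(Q(X),D^s)$ is compact, it is totally bounded, so Proposition~\ref{prop:totalbounded} makes $Q(X)$ $q$-plastic. Thus every bijective nonexpansive $F$ satisfies $D(Fp,Fq)=D(p,q)$ for all $p,q$. In particular, $D(q,p)=0$ if and only if $D(Fq,Fp)=0$, so both $F$ and $F^{-1}$ are monotone for $\preceq$.

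The second step is order-theoretic. A bijection that is monotone in both directions is a lattice isomorphism, because suprema and infima are determined by the order alone. So $F(\bot)=\bot$, $F(\top)=\top$, $F(p\wedge q)=Fp\wedge Fq$ and $F(p\vee q)=Fp\vee Fq$. If $q$ is a complement of $p$, then $Fq$ is a complement of $Fp$. Applying the same reasoning to $F^{-1}$ gives the reverse inclusion, so $F[C(Q(X))]=C(Q(X))$, that is, $F[e(X)]=e(X)$.

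The third step closes the argument. The restriction of $F$ to $e(X)$, transported by $e$, is a bijective nonexpansive self-map of $X$, so $q$-rigidity of $X$ makes it the identity. Then $F$ fixes $e(X)$ pointwise, and Lemma~\ref{lem:relative} gives $F=\id$.

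I expect no serious obstacle. The only substantive point is that nonexpansiveness alone yields only a one-directional order implication, $p\preceq q\Rightarrow Fp\preceq Fq$, which is too weak to preserve complements. This is exactly where compactness, via Proposition~\ref{prop:totalbounded}, upgrades $F$ to an isometry. I would also check that the $T_0$ property of $D$ makes $\preceq$ antisymmetric, so that the lattice operations are well defined.
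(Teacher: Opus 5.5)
Your proposal is correct and follows the paper's proof: compactness and Proposition~\ref{prop:totalbounded} make $F$ a $D$-isometry, hence an order automorphism preserving $\bot$, $\top$, meets, joins and complemented elements, so $F[e(X)]=e(X)$ and Theorem~\ref{thm:inheritance} applies. Your added details fill in steps the paper leaves implicit: applying the argument to $F^{-1}$ for the reverse inclusion, and noting that a merely nonexpansive map is monotone in only one direction.
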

\begin{proof}
By Proposition~\ref{prop:totalbounded}, a bijective nonexpansive hull map
is a $D$-isometry. It is therefore an order automorphism and preserves
$\bot,\top$, meets, joins and complemented elements. It preserves $e(X)$
setwise, so Theorem~\ref{thm:inheritance} gives $q$-rigidity.
\end{proof}
Compactness alone already gives $q$-plasticity. The added conclusion is
pointwise rigidity; the equality with complemented elements is a structural
hypothesis that must be verified, not a general property of canonical copies.

\begin{example}[A four-vertex asymmetric rectangle]\label{ex:rectangle}
Let $a,b>0$, $a\ne b$, and set
\[
B=[0,a]\times[0,b],\qquad X=\{(0,0),(a,0),(0,b),(a,b)\}.
\]
Equip both spaces with $u_2(p,q)=\max\{(q_1-p_1)^+,(q_2-p_2)^+\}$.
We identify $Q(X)$ with $B$ and verify every hypothesis of
Proposition~\ref{prop:complements}. The rectangular hull geometry is known:
\cite[Example~2]{EndpointsII} identifies $B$ as the hull of its opposite
vertices $(a,0),(0,b)$; see also \cite{Willerton}. All four vertices are
used here so that the canonical copy equals the complemented elements.

\emph{Hull identification.} Write $L_1=a,L_2=b$. For a nonempty
compatible family $(z^\lambda,r_\lambda,s_\lambda)$ in $B$, the
double-ball conditions in coordinate $i$ are
$z_i^\lambda-s_\lambda\le w_i\le z_i^\lambda+r_\lambda$ and
$0\le w_i\le L_i$. Compatibility gives
\[
\max\{0,\sup_\mu(z_i^\mu-s_\mu)\}
\le \min\{L_i,\inf_\lambda(z_i^\lambda+r_\lambda)\}.
\]
The bounds are finite, since centres are in $[0,L_i]$ and a fixed index
bounds the suprema and infima. Choose each $w_i$ between them.
The empty family has intersection $B$, so $B$ is $q$-hyperconvex.

To check tightness over $X$, let $\delta\le u_2$ be a quasi-pseudometric
agreeing with $u_2$ on $X\times X$, and set
$c_1=(0,b),d_1=(a,0),c_2=(a,0),d_2=(0,b)$.
For $p,q\in B$,
$u_2(c_i,d_i)=L_i$, $u_2(c_i,p)=p_i$ and
$u_2(q,d_i)=L_i-q_i$. Therefore
\[
L_i=\delta(c_i,d_i)
\le\delta(c_i,p)+\delta(p,q)+\delta(q,d_i)
\le p_i+\delta(p,q)+L_i-q_i.
\]
Thus $\delta(p,q)\ge(q_i-p_i)^+$ in both coordinates, giving
$\delta=u_2$. The tight-envelope characterization \cite{Tight}
identifies $B$ with $Q(X)$ by an isometry carrying $e(X)$ to $X$.

\emph{Compactness.} The symmetrization of $u_2$ is the maximum norm metric.
The closed rectangle is compact in that metric.

\emph{Bounded lattice order.} The relation
$p\preceq q\iff u_2(q,p)=0$ is coordinatewise order. Its least and
greatest elements are $(0,0)$ and $(a,b)$; meets and joins are coordinatewise
minima and maxima. It is therefore a bounded lattice.

\emph{Complemented elements.} If $p,q$ are complements, then
$\min\{p_i,q_i\}=0$ and $\max\{p_i,q_i\}=L_i$.
Hence $p_i\in\{0,L_i\}$, so $p$ is a vertex. Conversely, the complement
of a vertex $p$ is $(a-p_1,b-p_2)$. Consequently $C(B)=X$, which gives
$C(Q(X))=e(X)$ under the hull identification.

\emph{Rigidity of the original space.} A bijective nonexpansive map of
the finite set $X$ has some power $F^N=\id$. The cyclic chain
\[
u_2(x,y)\ge u_2(Fx,Fy)\ge\cdots\ge u_2(F^Nx,F^Ny)=u_2(x,y)
\]
makes it an isometry and order automorphism. It fixes bottom and top.
The two remaining vertices are the atoms, whose distances from $(0,0)$
are $a$ and $b$. These are distinct, so the atoms are fixed as well.
Thus $X$ is $q$-rigid, and Proposition~\ref{prop:complements} makes $B$
$q$-rigid. For instance, take $a=1,b=2$.

If $a=b$, the three structural hypotheses still hold, but coordinate
transposition is a nonidentity isometry of both $X$ and $B$.
This verifies the separate role of $q$-rigidity of the original space.
\end{example}

The upper integers and the rectangle illustrate different mechanisms:
bicompleteness and plasticity do not force preservation by the hull, whereas
an intrinsic lattice description can make the canonical copy invariant.
Compactness first supplies isometries, and recognition of the copy lets
rigidity of the original space control the hull.

\subsection{Further elementary illustrations}
The Sierpi\'nski pair $S=\{0,1\}$, with $v(0,1)=0,v(1,0)=1$,
has hull $([0,1],v)$ for $v(s,t)=(s-t)^+$.
Indeed, its minimal pairs are
$f_1^t(0)=t,f_1^t(1)=0,f_2^t(0)=0,f_2^t(1)=1-t$,
$0\le t\le1$, and $D(f^s,f^t)=(s-t)^+$.
Both spaces are $q$-rigid: the swap fails nonexpansiveness on the pair;
on the interval, compactness gives a Euclidean isometry, and reflection
fails directed nonexpansiveness.

More generally, put $u_n(x,y)=\max_{1\le i\le n}(y_i-x_i)^+$ on
$\R^n$. Its symmetrization is $\|x-y\|_\infty$.
The product double-ball argument used in Example~\ref{ex:rectangle}
shows $q$-hyperconvexity (without the coordinate truncations).
Homotheties give nonplasticity, every compact subspace is $q$-plastic by
Proposition~\ref{prop:totalbounded}, and Proposition~\ref{prop:dense}
gives a dense $q$-rigid subspace: each nonempty open set contains an
uncountable compact cube. These are illustrations of the preceding
mechanisms, not additional independent contributions.

\section{Directed intervals and endpoint rigidity}\label{sec:intervals}
The chain condition below is formulated in ordered distances. It gives an
endpoint-coordinate representation and total boundedness, hence automatic
$q$-plasticity. Endpoint preservation also controls nonbijective maps into
other spaces. Collinearity and endpoint geometry in quasi-metric hulls
have been studied in \cite{EndpointsII}.

\begin{definition}\label{def:interval}
A $T_0$-quasi-metric space $(I,d)$ is a \emph{two-sided directed interval}
with distinct endpoints $a,b$ if, for every ordered pair $(x,y)$, at least
one equality holds:
\begin{align}
d(a,b)&=d(a,x)+d(x,y)+d(y,b),\label{eq:chainab}\\
d(b,a)&=d(b,x)+d(x,y)+d(y,a).\label{eq:chainba}
\end{align}
\end{definition}
The orientation may depend on $(x,y)$. Under symmetry this says that
one of the chains $a,x,y,b$ and $a,y,x,b$ has length $d(a,b)$.

\begin{proposition}[Endpoint-coordinate representation]\label{prop:coordinates}
Every two-sided directed interval satisfies
\[
d(x,y)=\max\{(d(a,y)-d(a,x))^+,(d(b,y)-d(b,x))^+\}.
\]
Thus $\Phi(x)=(d(a,x),d(b,x))$ is an isometric embedding into
$(\R^2,u_2)$ with bounded image, and
$d^s(x,y)=\|\Phi(x)-\Phi(y)\|_\infty$.
\end{proposition}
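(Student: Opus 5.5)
The plan is to prove the two inequalities separately for a fixed ordered pair $(x,y)$.

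\emph{The lower bound} $d(x,y)\ge\max\{(d(a,y)-d(a,x))^+,(d(b,y)-d(b,x))^+\}$ uses only the triangle inequality. From $d(a,y)\le d(a,x)+d(x,y)$ and $d(b,y)\le d(b,x)+d(x,y)$, together with $d(x,y)\ge0$, both positive parts are at most $d(x,y)$.

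\emph{The upper bound} is where Definition~\ref{def:interval} enters, and it splits into the two cases given by the definition.

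Suppose first that \eqref{eq:chainab} holds for $(x,y)$. The triangle inequality gives $d(a,y)+d(y,b)\ge d(a,b)$. Combining this with \eqref{eq:chainab} yields
\[
d(a,x)+d(x,y)+d(y,b)=d(a,b)\le d(a,y)+d(y,b).
\]
Cancelling $d(y,b)$, which is finite, gives $d(x,y)\le d(a,y)-d(a,x)$. Hence $d(x,y)=d(a,y)-d(a,x)$, and this quantity equals its own positive part.

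Suppose instead that \eqref{eq:chainba} holds. The same argument, with $b$ playing the role of $a$, gives $d(x,y)=d(b,y)-d(b,x)$.

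In either case $d(x,y)$ equals one of the two terms inside the maximum. Together with the lower bound, this gives the stated formula.

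\emph{Consequences.} The right-hand side is exactly $u_2(\Phi(x),\Phi(y))$, so $\Phi$ preserves directed distances. It is injective by the $T_0$ condition, since equal images force $d(x,y)=d(y,x)=0$; in any case injectivity is automatic for an isometric embedding of a $T_0$ space. For boundedness, apply the definition to the pair $(x,x)$. Either $d(a,x)+d(x,b)=d(a,b)$ or $d(b,x)+d(x,a)=d(b,a)$. In the second case $d(a,x)\le d(a,b)+d(b,x)$ still needs a bound on $d(b,x)$, which that case supplies: $d(b,x)\le d(b,a)$. So in all cases $d(a,x)\le d(a,b)+d(b,a)$, and symmetrically for $d(b,x)$. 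Hence the image lies in the rectangle $[0,d(a,b)+d(b,a)]^2$.

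Finally, the symmetrization of $u_2$ is the maximum-norm distance, because $\max\{(t-s)^+,(s-t)^+\}=|t-s|$ coordinatewise and the maxima commute. This gives $d^s(x,y)=\|\Phi(x)-\Phi(y)\|_\infty$.

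The only step needing care is the cancellation in the upper bound, and it relies only on finiteness of $d$. I expect no real obstacle.
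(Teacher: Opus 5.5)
Your proof is correct and follows the paper's argument: the lower bound comes from the triangle inequality, and the upper bound from cancelling in whichever chain equality holds, with injectivity from $T_0$ and the max-norm identity obtained by symmetrizing. The only difference is boundedness: the paper notes that $d(x,y)$ is itself a summand of whichever chain holds, so every directed distance is at most $M=\max\{d(a,b),d(b,a)\}$ and $\Phi(I)\subseteq[0,M]^2$, a slightly sharper box than your $[0,d(a,b)+d(b,a)]^2$ from the diagonal pair $(x,x)$.
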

\begin{proof}
The triangle inequality bounds each positive difference by $d(x,y)$.
If \eqref{eq:chainab} holds, then
\[
d(x,y)=d(a,b)-d(a,x)-d(y,b)\le d(a,y)-d(a,x),
\]
using $d(a,b)\le d(a,y)+d(y,b)$. If \eqref{eq:chainba} holds,
the same argument gives $d(x,y)\le d(b,y)-d(b,x)$.
This proves the formula. The $T_0$ condition makes $\Phi$ injective.
Every directed distance is at most $M=\max\{d(a,b),d(b,a)\}$ by
the chain condition, so $\Phi(I)\subseteq[0,M]^2$.
Taking the maximum for both orientations gives the metric identity.
\end{proof}

\begin{corollary}\label{cor:intervalplastic}
Every two-sided directed interval is $q$-plastic.
\end{corollary}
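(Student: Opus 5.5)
The plan is to reduce the corollary to Proposition~\ref{prop:totalbounded}: it suffices to show that $(I,d^s)$ is totally bounded. Proposition~\ref{prop:coordinates} supplies exactly the needed structure. The map $\Phi(x)=(d(a,x),d(b,x))$ is an isometric embedding of $(I,d)$ into $(\R^2,u_2)$, and it satisfies $d^s(x,y)=\|\Phi(x)-\Phi(y)\|_\infty$. Its image lies in the square $[0,M]^2$, where $M=\max\{d(a,b),d(b,a)\}$.

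The key steps, in order, are as follows.

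\emph{Step 1.} Because $\Phi$ is an isometry from $(I,d^s)$ onto $(\Phi(I),\|\cdot\|_\infty)$, total boundedness of $I$ is equivalent to total boundedness of the subset $\Phi(I)$ of the plane with the maximum norm.

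\emph{Step 2.} The square $[0,M]^2$ is compact in the maximum-norm metric, hence totally bounded. Any subset of a totally bounded metric space is totally bounded: given $\varepsilon>0$, cover the square by finitely many $\varepsilon/2$-balls, keep those that meet $\Phi(I)$, and recentre each at a point of $\Phi(I)$ to obtain a finite $\varepsilon$-net.

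\emph{Step 3.} Transport this net back through $\Phi^{-1}$. This shows $(I,d^s)$ is totally bounded, and Proposition~\ref{prop:totalbounded} then gives that every bijective nonexpansive self-map of $I$ is an isometry.

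I do not expect a genuine obstacle. The only point needing care is that total boundedness must be taken for $d^s$ rather than for $d$; that is precisely the metric identity recorded in Proposition~\ref{prop:coordinates}. One could alternatively pass to the compact closure of $\Phi(I)$, but that is unnecessary, since the metric plasticity result invoked inside Proposition~\ref{prop:totalbounded} needs only total boundedness.
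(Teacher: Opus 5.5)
Your proof is correct and follows the paper's own argument: Proposition~\ref{prop:coordinates} makes $(I,d^s)$ isometric to a bounded subset of the maximum-norm plane, so it is totally bounded, and Proposition~\ref{prop:totalbounded} then applies. Your Step~2, which checks that a subset of a totally bounded space is totally bounded, just spells out a detail the paper leaves implicit.
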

\begin{proof}
Its symmetrization is isometric to a bounded subset of the maximum norm
plane, hence is totally bounded. Apply Proposition~\ref{prop:totalbounded}.
\end{proof}

\begin{theorem}\label{thm:endpoints}
Let $(I,d)$ be a two-sided directed interval with endpoints $a,b$.
If $F:I\to(Y,\rho)$ is nonexpansive and preserves both directed endpoint
distances, then $F$ is an isometric embedding.
\end{theorem}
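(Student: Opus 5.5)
The plan is to show $\rho(Fx,Fy)\ge d(x,y)$ for each ordered pair $(x,y)$; together with nonexpansiveness this gives equality. By Definition~\ref{def:interval}, at least one of \eqref{eq:chainab} or \eqref{eq:chainba} holds for $(x,y)$. By the symmetry between $a$ and $b$, it suffices to treat the case where \eqref{eq:chainab} holds. The hypothesis that $F$ preserves both directed endpoint distances means $\rho(Fa,Fb)=d(a,b)$ and $\rho(Fb,Fa)=d(b,a)$.

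Assume \eqref{eq:chainab} holds. Apply the triangle inequality in $Y$ along the chain $Fa,Fx,Fy,Fb$, then use nonexpansiveness on the two outer legs:
\[
d(a,b)=\rho(Fa,Fb)\le\rho(Fa,Fx)+\rho(Fx,Fy)+\rho(Fy,Fb)
\le d(a,x)+\rho(Fx,Fy)+d(y,b).
\]
Substituting $d(a,b)=d(a,x)+d(x,y)+d(y,b)$ from \eqref{eq:chainab} and cancelling the finite outer terms yields $d(x,y)\le\rho(Fx,Fy)$.

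The case where \eqref{eq:chainba} holds is identical, using the chain $Fb,Fx,Fy,Fa$ and the preserved value $\rho(Fb,Fa)=d(b,a)$.

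The argument needs no injectivity and no structure on $Y$ beyond the triangle inequality, so it applies to quasi-pseudometric targets as well. The only delicate point is that the orientation of the chain may vary with the pair. This is exactly why both endpoint distances must be preserved: each orientation uses its own preserved distance. Beyond that there is no real obstacle; the proof is the standard squeeze along a geodesic chain. Proposition~\ref{prop:coordinates} is not needed here.
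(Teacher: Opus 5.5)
Your proof is correct and matches the paper's argument: the triangle inequality along the chain $Fa,Fx,Fy,Fb$ (or $Fb,Fx,Fy,Fa$) is squeezed between the preserved endpoint distance and the chain equality. The only cosmetic difference is that you apply nonexpansiveness to the outer legs and cancel, whereas the paper bounds all three legs and then argues that equality must hold termwise.
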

\begin{proof}
For an ordered pair satisfying \eqref{eq:chainab},
\begin{align*}
d(a,b)&=\rho(Fa,Fb)\\
&\le\rho(Fa,Fx)+\rho(Fx,Fy)+\rho(Fy,Fb)\\
&\le d(a,x)+d(x,y)+d(y,b)=d(a,b).
\end{align*}
Equality throughout forces $\rho(Fx,Fy)=d(x,y)$.
For \eqref{eq:chainba}, use the chain starting at $b$ and ending at $a$
and preservation of $d(b,a)$. Thus every directed distance is preserved.
\end{proof}

\begin{theorem}\label{thm:diameter}
If $(a,b)$ is the unique ordered diametral pair of a two-sided directed
interval $(I,d)$, then $I$ is $q$-rigid.
\end{theorem}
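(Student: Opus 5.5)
Let $F:I\to I$ be a bijective nonexpansive map. We first use Corollary~\ref{cor:intervalplastic} to conclude that $F$ is an isometry. An isometry preserves every directed distance, so the ordered diameter $\sup_{x,y}d(x,y)$ is unchanged, and an ordered pair attains it exactly when its image does. Every directed distance is at most $M=\max\{d(a,b),d(b,a)\}$ by the chain condition (as in the proof of Proposition~\ref{prop:coordinates}). So the diameter is attained, and the unique ordered diametral pair is one of $(a,b)$, $(b,a)$; by hypothesis it is $(a,b)$. Since $(Fa,Fb)$ is again diametral, uniqueness gives $Fa=a$ and $Fb=b$.

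Next we show $F$ fixes every point. For $x\in I$ we have $d(a,Fx)=d(Fa,Fx)=d(a,x)$ and likewise $d(b,Fx)=d(b,x)$, so $\Phi(Fx)=\Phi(x)$ for the endpoint map $\Phi$ of Proposition~\ref{prop:coordinates}. That proposition makes $\Phi$ an isometric embedding into $(\R^2,u_2)$, hence injective (by $T_0$). Therefore $Fx=x$ for all $x$, and $F=\id$. Equivalently, Theorem~\ref{thm:endpoints} applies to $F$, and the representation formula pins down each point by its two forward endpoint distances.

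The main point to check is the diametral step. We must ensure the diameter is attained, which holds because the hypothesis supposes the pair $(a,b)$ exists and the chain bound shows no distance exceeds $M$. We must also ensure an isometric bijection maps diametral pairs to diametral pairs; this is immediate from distance preservation, but it does require the plasticity step first, since merely nonexpansive maps can only decrease $d(Fa,Fb)$. So the proof is ordered as: plasticity (Corollary~\ref{cor:intervalplastic}), then endpoint preservation via uniqueness, then endpoint-coordinate injectivity (Proposition~\ref{prop:coordinates}).
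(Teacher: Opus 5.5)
Your proof is correct, but it gets the isometry in a different order from the paper. You first invoke Corollary~\ref{cor:intervalplastic} to make $F$ an isometry. You then push the diametral pair forward: $(Fa,Fb)$ is diametral, hence equals $(a,b)$.

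The paper instead pulls the diametral pair back using only surjectivity and nonexpansiveness. Choosing $x,y$ with $Fx=a$ and $Fy=b$ gives $\Delta=d(Fx,Fy)\le d(x,y)\le\Delta$, so uniqueness forces $x=a$ and $y=b$. Only then does Theorem~\ref{thm:endpoints} supply the isometry, by its elementary chain-equality argument. The final step, injectivity of $\Phi$, is the same in both proofs.

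The practical difference is what each argument depends on. Yours relies, through Proposition~\ref{prop:totalbounded}, on the metric plasticity theorem for totally bounded spaces. The paper's argument is self-contained and never uses injectivity of $F$, so it shows that every surjective nonexpansive self-map of such an interval is the identity.

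Your closing remark that the plasticity step is ``required'' is therefore mistaken. It is true that nonexpansiveness only lets $d(Fa,Fb)$ decrease, but working with preimages turns the inequality in the useful direction: $d(F^{-1}a,F^{-1}b)\ge d(a,b)$.
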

\begin{proof}
The interval is bounded by Proposition~\ref{prop:coordinates}.
Let $\Delta=\diam_d(I)=d(a,b)$ and let $F$ be bijective and nonexpansive.
Choose $x,y$ with $Fx=a,Fy=b$. Then
$\Delta=d(Fx,Fy)\le d(x,y)\le\Delta$.
Uniqueness gives $x=a,y=b$, so $F$ fixes the endpoints.
Theorem~\ref{thm:endpoints} makes $F$ an isometry, which preserves
$\Phi$. Injectivity of $\Phi$ in Proposition~\ref{prop:coordinates}
gives $F=\id$.
\end{proof}
In fact, any nonexpansive self-map fixing both endpoints is the identity:
apply Theorem~\ref{thm:endpoints} and injectivity of $\Phi$.
The unique-diameter hypothesis is one intrinsic way to force this endpoint
recognition; it is not needed for $q$-plasticity.

\begin{example}\label{ex:upperinterval}
For $a<b$, the upper interval $([a,b],u)$ is $q$-rigid.
If $x\le y$, it satisfies the chain from $a$ to $b$; if $x>y$,
the chain from $b$ to $a$ has all distances zero. Also $(a,b)$ is
the unique pair at directed distance $b-a$, so
Theorem~\ref{thm:diameter} applies.
\end{example}

\section{Limitations and further questions}\label{sec:questions}
The maximum symmetrization need not retain the relevant plasticity behaviour.
\begin{proposition}\label{prop:maxcounter}
Plasticity of $(X,d^s)$ does not imply $q$-plasticity of $(X,d)$.
\end{proposition}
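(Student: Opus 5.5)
We need a counterexample: a $T_0$ quasi-metric $d$ whose symmetrization is plastic, but with a bijective $d$-nonexpansive map that is not a $d$-isometry. By Proposition~\ref{prop:totalbounded}, $d^s$ must not be totally bounded, so the space has to be infinite and non-precompact.

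The plan is to exploit the gap between $d^s$ and $d$ on a discrete space. Take $X=\Z$ (or $\mathbb N$) with $d^s$ a metric for which plasticity is easy to check, and make $d$ have freedom that a shift can exploit. The concrete attempt uses points $x_n$ for $n\in\Z$ with
\[
d(x_m,x_n)=\begin{cases}1,& m\ne n,\ \text{orientation allowed}\\ 0 & \text{otherwise}.\end{cases}
\]
The difficulty is that the discrete $\{0,1\}$ metric on an infinite set is not plastic: any bijection is a $d^s$-isometry, but permutations are also allowed. We therefore need $d^s$ rigid enough, yet carrying a directed part that a bijection can shrink.

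A better candidate comes from the upper integers modified. We want a bijection $F$ that is a $d^s$-isometry, since a plastic $d^s$ forces this, but not a $d$-isometry. Because $d(x,y)$ is determined by the pair under $d^s$ only up to which orientation carries the maximum, take $d^s(x,y)=\max\{d(x,y),d(y,x)\}$ with $d(x,y)+d(y,x)$ not preserved. Concretely, let $X=\{p_n:n\in\Z\}$ with
\[
d(p_m,p_n)=\begin{cases}|m-n|,& m<n,\\ c_{m,n},& m>n,\end{cases}
\]
where $0\le c_{m,n}\le m-n$ decreases along the shift $n\mapsto n+1$. For example, set $c_{m,n}=\tfrac12(m-n)$ when $n\ge0$ and $c_{m,n}=0$ otherwise; we must then check the triangle inequality. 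In this construction $d^s(p_m,p_n)=|m-n|$, the integers with the usual metric. The shift $F(p_n)=p_{n+1}$ is a $d^s$-isometry, and the aim is $c_{m+1,n+1}\le c_{m,n}$ with strict inequality for some pair, which gives a bijective $d$-nonexpansive non-isometry. The reverse choice of $c$, equal to $0$ for $n\ge0$ and $\tfrac12(m-n)$ for $n<0$, is nonincreasing under the shift, with strictness when $n=-1$.

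The remaining steps are as follows. First, we show that $(\Z,|\cdot|)$ is plastic. A bijective $1$-Lipschitz map of $\Z$ sends neighbours to points at distance $\le1$, hence exactly $1$ by injectivity. A path argument, as in Proposition~\ref{prop:integers}, then yields a translation or a reflection, which is an isometry. Second, we verify the triangle inequality for $d$ with the chosen $c$. This is the main obstacle, since mixing orientations with a cutoff at $0$ may break inequalities such as $d(p_m,p_k)\le d(p_m,p_n)+d(p_n,p_k)$ for $m>n<k$. If the cutoff fails, we instead take $c_{m,n}=\max\{0,\min(m,0)-n\}$, that is, the descending length measured only inside the negative half. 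A descending step from $m$ to $n$ with $n<m$ then costs the part of $[n,m]$ lying below $0$. This is a sum of the "upper distance below $0$" and the "usual ascending distance", and such a sum is a quasi-metric because each piece is a pseudo-quasi-metric. Third, $T_0$ is immediate because the ascending part is positive. Fourth, the shift strictly reduces $d(p_0,p_{-1})$ from $1$ to $0$, so it is not a $d$-isometry, while nonexpansiveness follows from monotonicity of the negative-part measure under translation.
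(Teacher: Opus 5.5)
Your final construction is correct, and it takes a genuinely different route from the paper. Write it as
\[
d(m,n)=(n-m)^+ + \bigl(\min\{m,0\}-\min\{n,0\}\bigr)^+ \qquad (m,n\in\Z).
\]
This agrees with your piecewise definition, because the second summand vanishes when $m<n$. In this form each required property can be checked directly:
\begin{itemize}
\item The triangle inequality holds because $d$ is the sum of the upper quasi-metric and the pullback of its conjugate along $m\mapsto\min\{m,0\}$.
\item The $T_0$ condition comes from the first summand.
\item $d^s(m,n)=|m-n|$, because the descending cost is at most $m-n$.
\item The shift preserves ascending distances and does not increase descending ones, since $[n+1,m+1]\cap(-\infty,0]$ is a translate of $[n,m]\cap(-\infty,-1]\subseteq[n,m]\cap(-\infty,0]$.
\item The shift is not an isometry, since $d(0,-1)=1>0=d(1,0)$.
\end{itemize}

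The paper instead makes the symmetrization discrete. It sets $d(m,n)=a_m\in(1,2)$, strictly decreasing in $m$, for $m<n$, and $d(m,n)=2$ for $m>n$. Then $d^s$ is constant off the diagonal, so every bijection is a $d^s$-isometry and plasticity of $d^s$ comes for free. The triangle inequality follows because all nonzero distances lie in $(1,2]$. The shift strictly lowers forward distances.

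The paper's example is quicker to check. Yours needs the short neighbour argument for plasticity of $(\Z,|\cdot|)$, but it gets the triangle inequality structurally. It also gives a sharper contrast: your $d$ has the same symmetrization as the $q$-plastic upper integers $(\Z,u)$ of Proposition~\ref{prop:integers}. So $q$-plasticity is not determined by $d^s$ even for the standard metric on $\Z$.

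Two corrections to the exploratory part. First, your claim that the discrete metric on an infinite set is not plastic is false. Plasticity only requires bijective nonexpansive maps to be isometries, and for a constant-distance metric every bijection is one. You are conflating plasticity with rigidity, and in doing so you dismissed exactly the paper's route. Second, drop the intermediate candidates from a write-up. The first choice of $c_{m,n}$ increases under the shift at $n=-1$. The reversed choice violates the triangle inequality, since $d(p_2,p_{-1})=3/2>0+1/2=d(p_2,p_0)+d(p_0,p_{-1})$.
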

\begin{proof}
On $\Z$ put $a_m=1+(1+2^m)^{-1}$ and
\[
d(m,n)=\begin{cases}0,&m=n,\\a_m,&m<n,\\2,&m>n.\end{cases}
\]
All nonzero distances lie in $(1,2]$. A triangle through a third distinct
point has right-hand side greater than $2$; repeated-point triangles are
immediate. Thus $d$ is a $T_0$-quasi-metric. Its symmetrization has constant
distance $2$ between distinct points, so every bijection is a metric isometry.
But $F(m)=m+1$ is bijective and $d$-nonexpansive: forward distances
strictly decrease and reverse distances stay $2$. It is not a $d$-isometry.
\end{proof}

The hull examples suggest the following questions.
\begin{problem}
Find intrinsic conditions under which $e(X)$ is invariant under every
bijective nonexpansive self-map of $Q(X)$. In particular, characterize
canonical minimal pairs intrinsically and find recognition criteria for
noncompact hulls. Which spaces have $q$-plastic or $q$-rigid hulls?
\end{problem}
\begin{problem}
Determine natural bicomplete asymmetric normed spaces that are q-LCDH.
For separable c-crowded quasi-metric spaces, which cardinalities and
descriptive complexities can be prescribed for dense $q$-rigid subspaces?
When can they be Borel, analytic or $\sigma$-compact?
\end{problem}
\begin{problem}
Identify geometric conditions under which $q$-plasticity of $(X,d)$ is
equivalent to plasticity of $(X,d^s)$, or under which preservation of
$d^s$ by a bijective $d$-nonexpansive map forces preservation of $d$.
\end{problem}
For the sum symmetrization, preservation already suffices by equality of
the two nonexpansive summands.

Two qualifications are necessary for questions about asymmetric normed
spaces. A nontrivial real linear subspace is never $q$-plastic, since
$x\mapsto x/2$ is a bijective contraction for $d_p(x,y)=p(y-x)$.
A nontrivial additive subgroup is never pointwise $q$-rigid, because its
translations are nonidentity isometries.
\begin{problem}
When does a separable asymmetric normed space contain a dense additive
subgroup $H$ on which every bijective $d_p$-nonexpansive map is an
isometry? When must every such map fixing $0$ be the identity, equivalently,
every such map be a translation?
\end{problem}
If $p$ is symmetric, inversion fixes $0$ and is a nonidentity isometry of
each nontrivial additive subgroup. Origin-fixing rigidity therefore requires
asymmetric geometry excluding inversion, or a weaker notion allowing it.

\begin{problem}
Characterize two-sided directed intervals whose endpoints are preserved
by every bijective nonexpansive self-map without assuming a unique ordered
diametral pair. For hulls, find conditions making a bijective nonexpansive
map uniquely determined by its restriction to the canonical copy.
\end{problem}

\section{Conclusion}
The upper-line examples show that neither $q$-plasticity together with
bicompleteness nor pointwise $q$-rigidity forces $q$-plasticity of the
$q$-hyperconvex hull. Tightness propagates distances from the canonical copy,
but invariance of that copy is an additional issue. The complemented-element
criterion resolves it under explicit compactness and lattice hypotheses,
verified by the four-vertex rectangle.

The remaining results identify the supporting mechanisms: total boundedness
gives $q$-plasticity through sum symmetrization, dense rigidity transfers from
the metric theorem, and the dense-subspace contraction argument remains
conditional on q-LCDH. Directed chain equalities give a bounded
endpoint-coordinate representation and automatic plasticity; unique ordered
diametral endpoints supply pointwise rigidity. Broader noncompact recognition
criteria and concrete asymmetric homogeneity classes remain open directions.

\par\needspace{5\baselineskip}
\section*{Declarations}
\noindent\textbf{Conflict of interest.} The author declares that there is
no conflict of interest.\\
\textbf{Funding.} The author was supported in part by the National Research
Foundation of South Africa.

\end{document}